\documentclass{amsart}
\usepackage{a4wide}
\usepackage[usenames]{color}
\usepackage{multicol}

\newtheorem{lemma}{Lemma}
\newtheorem{theorem}[lemma]{Theorem}
\newtheorem{definition}[lemma]{Definition}
\newtheorem{proposition}[lemma]{Proposition}
\newtheorem{corollary}[lemma]{Corollary}

\newtheorem{remark}[lemma]{Remark}

\newtheorem{conjecture}[lemma]{Conjecture}

\title{Fixed points of morphisms among binary generalized pseudostandard words}

\author{L\!'ubom\'ira Dvo\v r\'akov\'a}
\author{Tereza Velk\'a}
\address{Department of Mathematics, FNSPE, Czech Technical University in Prague,
Trojanova 13, 120~00 Praha~2, Czech Republic}
\email{lubomira.dvorakova@fjfi.cvut.cz, velkater@fjfi.cvut.cz}

\begin{document}

\begin{abstract}
We introduce a class of fixed points of primitive morphisms among aperiodic binary generalized pseudostandard words. We conjecture that this class contains all fixed points of primitive morphisms among aperiodic binary generalized pseudostandard words that are not standard Sturmian words.
\end{abstract}

\maketitle

\section{Introduction}

This note is devoted to binary infinite words generated by a construction called generalized pseudopalindromic closure, known as generalized pseudostandard words.  Concerning this topic, the following facts are known so far: In~\cite{LuDeLu} the generalized pseudostandard words were defined and it was proved there that the famous Thue--Morse word is an example of such words. The authors of~\cite{JaPeSt} characterized generalized pseudostandard words in the class of generalized Thue--Morse words. A~necessary and sufficient condition on periodicity of binary and ternary generalized pseudostandard words was provided in~\cite{BaFl}. The authors of~\cite{MaPa} focused on binary generalized pseudostandard words and obtained several interesting results: an algorithm for a so-called normalization, an effective algorithm for generation of such words, description of generalized pseudostandard words among Rote words.

In this paper, we introduce a new class of aperiodic binary generalized pseudostandard words being fixed points of primitive morphisms.
We moreover conjecture that this is the only class among aperiodic binary generalized pseudostandard words except standard Sturmian words consisting of fixed points of primitive morphisms.

\section{Preliminaries}
Throughout the paper, $\mathbb N$ denotes natural numbers, i.e., $\mathbb N=\{1,2,3,\ldots\}$, while ${\mathbb N}_0=\{0\} \cup \mathbb N$. We restrict ourselves to the binary \emph{alphabet} $\{0,1\}$, we call $0$ and $1$ \emph{letters}. A~\emph{(finite) word} $w$ over $\{0,1\}$ is any finite binary sequence. Its length $|w|$ is the number of letters $w$ contains.
The empty word -- the neutral element for concatenation of words -- is denoted by $\varepsilon$ and its length is set $|\varepsilon|=0$.
The set of all finite binary words is denoted by ${\{0,1\}}^*$. If a~finite word $w=pvs$, where $p, v, s \in {\{0,1\}}^*$, then $p$ is called a~\emph{prefix} of $w$ and $s$ is called a~\emph{suffix} of $w$.
An \emph{infinite word} $\mathbf u$ over $\{0,1\}$ is any binary infinite sequence.
The set of all infinite words is denoted $\{0,1\}^{\mathbb N}$.
A finite word $w$ is a~\emph{factor} of the infinite word $\mathbf u=u_0u_1u_2\ldots$ with $u_i \in \{0,1\}$ if there exists an index $i\geq 0$ such that $w=u_iu_{i+1}\ldots u_{i+|w|-1}$. Such an index is called an \emph{occurrence} of $w$ in $\mathbf u$.

An infinite word $\mathbf u$ is called \emph{recurrent} if each of its factors occurs infinitely many times in $\mathbf u$.
It is said to be \emph{uniformly recurrent} if for every $n \in \mathbb N$ there exists a~length $r(n)$ such that every factor of length $r(n)$ of $\mathbf u$ contains all factors of length $n$ of $\mathbf u$.
We say that an infinite word ${\mathbf u}$ is \emph{eventually periodic} if there exists $v, w \in \{0,1\}^{*}$ such that ${\mathbf u}=wv^{\omega}$, where $\omega$ denotes an infinite repetition. If $w=\varepsilon$, we call $\mathbf u$ \emph{(purely) periodic}. If $\mathbf u$ is not eventually periodic, $\mathbf u$ is said to be \emph{aperiodic}.
It is not difficult to see that if an infinite word is recurrent and eventually periodic, then it is necessarily purely periodic.

A \emph{morphism} is a~map $\varphi: \{0,1\}^* \rightarrow \{0,1\}^*$ such that for every $v, w \in \{0,1\}^*$ we have $\varphi(vw) = \varphi(v) \varphi(w)$. It is clear that in order to define a~morphism, it suffices to provide letter images.
Application of a~morphism $\varphi$ may be naturally extended to an infinite word $\mathbf u=u_0 u_1 u_2 \ldots \in \{0,1\}^{\mathbb N}$ as $\varphi(\mathbf u)=\varphi(u_0)\varphi(u_1)\varphi(u_2)\ldots$ If $\mathbf u=\varphi(\mathbf u)$ for some infinite or finite word $\mathbf u$, we call $\mathbf u$ a~\emph{fixed point} of $\varphi$. Let us make a trivial observation: any periodic infinite word $\mathbf u=v^{\omega}$ is a fixed point of a morphism (for instance, it suffices to set every letter image equal to $v$).

A~morphism $\varphi$ is said to be \emph{primitive} if there exists $n \in \mathbb N$ such that both $\varphi^n(0)$ and $\varphi^n(1)$ contain both letters $0$ and $1$. It is obvious that if an infinite word $\mathbf u \in \{0,1\}^{\mathbb N}$ satisfies $\varphi(\mathbf u)=\mathbf u$ for some primitive morphism $\varphi$, then there exists a~letter $a \in \{0,1\}$ such that $\varphi(a)$ starts in $a$ and $|\varphi(a)|\geq 2$. We say that $\varphi$ is
\emph{prolongable} on $a$. The fixed point $\mathbf u$ starting in $a$ has evidently $\varphi^n(a)$ as its prefix for all $n \in \mathbb{N}$. We sometimes write $\mathbf u=\lim_{n \to \infty}\varphi^n(a)$. It is known that fixed points of primitive morphisms are uniformly recurrent.

%

An \emph{involutory antimorphism} is a~map $\vartheta: \{0,1\}^* \rightarrow \{0,1\}^*$ such that for every $v, w \in \{0,1\}^*$ we have $\vartheta(vw) = \vartheta(w) \vartheta(v)$ and moreover $\vartheta^2$ equals identity. There are only two involutory antimorphisms over the alphabet $\{0,1\}$: the \emph{reversal (mirror) map} $R$ satisfying $R(0)=0, R(1)=1$, and the \emph{exchange antimorphism} $E$ given by $E(0)=1, E(1)=0$. We use the notation $\overline{0} = 1$ and $\overline{1} = 0$, $\overline{E}=R$ and $\overline{R}=E$. A finite word $w$ is a~\emph{palindrome} (an \emph{$R$-palindrome}) if $w = R(w)$, and $w$ is an $E$-\emph{palindrome} ({\em pseudopalindrome}) if $w = E(w)$.
The palindromic closure $w^R$ of a~word $w$ is the shortest palindrome having $w$ as prefix. Similarly, the pseudopalindromic closure $w^E$ of $w$ is the shortest $E$-palindrome having $w$ as prefix.

\subsection{Generalized pseudostandard words}
Let us underline that we again restrict ourselves only to the binary alphabet $\{0,1\}$.

\begin{definition}
Let $\Delta = \delta_1 \delta_2 \ldots \in \{0,1\}^{\mathbb N}$ and $\Theta = \vartheta_1 \vartheta_2 \ldots \in \{E, R\}^{\mathbb N}$. The infinite word $\mathbf{u}(\Delta, \Theta)$ \emph{generated by the generalized pseudopalindromic closure (or generalized pseudostandard word)} is the word whose prefixes $w_n$ are obtained from the recurrence relation
$$w_{n+1} = (w_n \delta_{n+1})^{\vartheta_{n+1}},$$ $$w_0 = \varepsilon.$$ The sequence $\Lambda = (\Delta, \Theta)$ is called the \emph{directive bi-sequence} of the word $\mathbf{u}(\Delta, \Theta)$.

If $\Theta=R^{\omega}$, the word $\mathbf{u}(\Delta, \Theta)$ is called \emph{$R$-standard}. If it is moreover aperiodic, it is called \emph{standard Sturmian}. If $\Theta=E^{\omega}$, the word $\mathbf{u}(\Delta, \Theta)$ is called \emph{$E$-standard} or \emph{pseudostandard}.
\end{definition}

It is readily seen that generalized pseudostandard words are uniformly recurrent.

The sequence $(w_k)_{k\geq 0}$ of prefixes of a~generalized pseudostandard word ${\mathbf u}(\Delta, \Theta)$ does not have to contain all $E$-palindromic and $R$-palindromic prefixes of ${\mathbf u}(\Delta, \Theta)$. Blondin Mass\'e et al.~\cite{MaPa} introduced the notion of normalization of the directive bi-sequence.

\begin{definition}
A~directive bi-sequence $\Lambda=(\Delta, \Theta)$ of a~generalized pseudostandard word $\mathbf{u}(\Delta, \Theta)$ is called \emph{normalized} if the sequence of prefixes $(w_k)_{k\geq 0}$ of $\mathbf{u}(\Delta, \Theta)$ contains all $E$-palindromic and $R$-palindromic prefixes of ${\mathbf u}(\Delta, \Theta)$.
\end{definition}

The authors of~\cite{MaPa} proved that every directive bi-sequence $\Lambda$ can be normalized, i.e., transformed to such a~form $\widetilde \Lambda$ that the new sequence $(\widetilde{w_k})_{k\geq 0}$ contains already every $E$-palindromic and $R$-palindromic prefix and $\widetilde \Lambda$ generates the same generalized pseudostandard word as~$\Lambda$.

\begin{theorem}\label{thm_norm}
Let $\Lambda = (\Delta, \Theta)$ be a~directive bi-sequence. Then there exists a~normalized directive bi-sequence $\widetilde{\Lambda} = (\widetilde{\Delta}, \widetilde{\Theta})$ such that ${\mathbf u}(\Delta, \Theta) = {\mathbf u}(\widetilde{\Delta}, \widetilde{\Theta})$.

Moreover, in order to normalize the sequence $\Lambda$, it suffices firstly to execute the following changes of its prefix (if it is of the corresponding form):
\begin{itemize}
\item $(a\bar{a}, RR) \rightarrow (a\bar{a}a, RER)$,
\item $(a^i, R^{i-1}E) \rightarrow (a^i\bar{a}, R^iE)$ for $i \geq 1$,
\item $(a^i\bar{a}\bar{a}, R^iEE) \rightarrow (a^i\bar{a}\bar{a}a, R^iERE)$ for $i \geq 1$,
\end{itemize}
and secondly to replace step by step from left to right every factor of the form:
\begin{itemize}
\item $(ab\bar{b}, \vartheta\overline{\vartheta}\overline{\vartheta}) \rightarrow (ab\bar{b}b, \vartheta\overline{\vartheta}\vartheta\overline{\vartheta})$,
\end{itemize}
where $a, b \in \{0,1\}$ and $\vartheta \in \{E,R\}$.
\end{theorem}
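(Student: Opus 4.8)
The plan is to reformulate normalization as a purely local repair problem. Let $\mathcal{P}$ denote the set of all $E$-palindromic and $R$-palindromic prefixes of $\mathbf{u}(\Delta,\Theta)$; since two prefixes of the same word are ordered by length and nested, $\mathcal{P}$ is linearly ordered, and the bi-sequence is normalized exactly when $(w_k)_{k\ge 0}$ enumerates all of $\mathcal{P}$. So the theorem amounts to showing that whenever $\mathcal{P}$ contains an element strictly between two consecutive prefixes, this \emph{skipped} prefix sits in one of the listed configurations and is restored by the corresponding rewrite. The key auxiliary object is, for each step, the opposite-type closure $q_n := (w_n\delta_{n+1})^{\overline{\vartheta_{n+1}}}$. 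I would first record the standard length formula for one closure step: writing $s$ for the longest $\vartheta_{n+1}$-palindromic suffix of $w_n\delta_{n+1}$, one has $|w_{n+1}| = 2|w_n\delta_{n+1}| - |s|$, together with the classical fact that no $\vartheta_{n+1}$-palindromic prefix has length strictly between $|w_n|$ and $|w_{n+1}|$ other than $w_{n+1}$ itself. It follows that any skipped prefix at step $n+1$ must be of the opposite type $\overline{\vartheta_{n+1}}$, and I would show it can only be $q_n$, so that a skip occurs precisely when $q_n$ is a proper prefix of $w_{n+1}$ with $|q_n| > |w_n|$.

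Next I would carry out the classification. From the length formula, a maximal run on which $\vartheta$ is constant never skips a prefix (this is the pure Sturmian, respectively pure pseudostandard, situation), so every skip is localised at a change of antimorphism. I would then analyse a bounded window around each such change by comparing the longest $\vartheta_{n+1}$-palindromic and $\overline{\vartheta_{n+1}}$-palindromic suffixes of $w_n\delta_{n+1}$, thereby deciding when $|q_n| < |w_{n+1}|$. This case analysis should show that a skip forces the local letters and antimorphisms to match exactly one of the three boundary patterns $(a\bar a, RR)$, $(a^i, R^{i-1}E)$, $(a^i\bar a\bar a, R^iEE)$ — those windows that abut the empty prefix $w_0=\varepsilon$ and therefore have no room to the left — or else the generic interior pattern $(ab\bar b, \vartheta\overline{\vartheta}\,\overline{\vartheta})$. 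Establishing that this finite list is \emph{exhaustive} is the technical heart of the argument and the step I expect to be the main obstacle: one must rule out every other suffix configuration and verify that at most one opposite-type prefix is buried per step.

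Finally, I would verify each rewrite and then assemble the global procedure. For a single rule, the modified step carries the opposite antimorphism and therefore produces exactly the skipped prefix $q_n$, while the subsequently appended step reproduces the old $w_{n+1}$; hence the infinite word is unchanged and the missing element of $\mathcal{P}$ is now listed. This is a direct computation of the relevant closures, routine once the classification is in place. For the global statement, I would first apply the three boundary rules to fix the prefix, then sweep the recurrent rule from left to right. Since each application only alters a bounded window and the letters/antimorphisms it introduces are re-examined as the sweep proceeds, I would argue that the rewriting frontier moves strictly forward, that no application retroactively creates a defect to its left, and that the procedure converges to a limit bi-sequence $\widetilde{\Lambda}$ containing no forbidden configuration. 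By the classification of the second paragraph, the absence of forbidden configurations means no prefix of $\mathcal{P}$ is skipped, so $\widetilde{\Lambda}$ is normalized and, by the per-step invariance, satisfies $\mathbf{u}(\Delta,\Theta)=\mathbf{u}(\widetilde{\Delta},\widetilde{\Theta})$.
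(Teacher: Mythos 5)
There is nothing in the paper to compare against: Theorem~\ref{thm_norm} is quoted from Blondin Mass\'e, Paquin, Tremblay and Vuillon~\cite{MaPa}, and the paper reproduces the statement without proof. So your proposal has to stand on its own, and as it stands it has a genuine gap. You yourself defer the decisive step --- the exhaustive classification showing that every skipped pseudopalindromic prefix forces exactly one of the four listed patterns --- calling it ``the main obstacle.'' That classification \emph{is} the theorem; everything else (the length formula $|w_{n+1}|=2|w_n\delta_{n+1}|-|s|$, the verification that each rewrite inserts the missing closure $q_n$ and then reproduces the old $w_{n+1}$) is routine bookkeeping. A proof that announces its central lemma as an expected obstacle is a plan, not a proof.

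Moreover, the one structural claim you do offer in support of the classification is wrong as stated. You argue that ``a maximal run on which $\vartheta$ is constant never skips a prefix (this is the pure Sturmian, respectively pure pseudostandard, situation), so every skip is localised at a change of antimorphism.'' But the pure Sturmian situation does skip prefixes of the \emph{opposite} type: for the Fibonacci word $\mathbf u((01)^\omega, R^\omega)$ the prefix $01$ is an $E$-palindrome (since $E(01)=01$) and is not among the $w_k=0,010,010010,\ldots$ --- which is precisely why the rule $(a\bar a,RR)\rightarrow(a\bar a a,RER)$ exists. De Luca's result only guarantees that no $R$-palindromic prefix is missed during an $R$-run; it says nothing about $E$-palindromic prefixes hiding inside that run, and ruling those out in the interior is exactly the combinatorial content you would need to supply. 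Two further points are asserted without argument: that at most one prefix (namely $q_n$) can be buried per step, and that the left-to-right sweep is confluent, i.e.\ that the classification remains valid for the \emph{evolving} bi-sequence (each rewrite inserts a new antimorphism change, so defects detected later live in a sequence that earlier rewrites have already altered; the localisation claim must therefore be proved by induction along the sweep, not read off from the original $\Theta$). For the actual argument you should consult~\cite{MaPa}, where these points are handled.
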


A necessary and sufficient condition for periodicity of binary generalized pseudostandard words was found in~\cite{BaFl}.
	
	\begin{theorem}
		\label{VZPSperiodic}
		A binary generalized pseudostandard word $\mathbf u(\Delta, \Theta)$, where $\Delta = \delta_1 \delta_2 \ldots \in \{0,1\}^{\mathbb N}$ and $\Theta = \vartheta_1 \vartheta_2 \ldots \in \{E, R\}^{\mathbb N}$, is periodic if and only if the directive bi-sequence $(\Delta, \Theta)$ satisfies the following condition:
		$$ (\exists a \in \{0,1\})(\exists \vartheta \in \{E,R\})(\exists n_0 \in \mathbb N)(\forall n > n_0, n \in \mathbb N)(\delta_{n+1} = a \Leftrightarrow \vartheta_n = \vartheta).$$
	\end{theorem}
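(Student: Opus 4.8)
The plan is to work throughout with a \emph{normalized} directive bi-sequence, which we may do by Theorem~\ref{thm_norm} since normalization preserves the generated word; it is normalization that makes the local analysis of the closure tractable. I record the elementary facts used repeatedly: each prefix $w_n$ is a $\vartheta_n$-palindrome, every $w_n$ begins with $c:=\delta_1$, and hence $w_n$ ends in $c$ when $\vartheta_n=R$ and in $\bar c$ when $\vartheta_n=E$; in short the last letter of $w_n$ is $\vartheta_n(c)$, reading $R(c)=c$ and $E(c)=\bar c$. With this I first translate the stated biconditional into a statement about the construction: among the four pairs $(a,\vartheta)$, the two with $a=\vartheta(c)$ express that $\delta_{n+1}$ equals the last letter of $w_n$, while the other two express that $\delta_{n+1}$ equals its complement. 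Thus the condition is equivalent to saying that, from some index on, the construction always appends the current last letter (the \emph{S-rule}) or always appends its complement (the \emph{D-rule}); equivalently, the binary sequence recording ``same or complement'' is eventually constant.

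For the direction ``condition $\Rightarrow$ periodic'' I would assume this rule is eventually constant and show that the $w_n$ are, past a finite initial segment, nested prefixes of a single periodic word $z^{\omega}$ with $z$ a block word $0^{p}1^{q}$ (or its reverse). The engine is an invariance lemma: if $w_n$ already displays at least two full periods of such a $z^{\omega}$, then appending the last letter (or its complement) and taking the $\vartheta_{n+1}$-closure yields a word still displaying the pattern $z^{\omega}$. This is where the two antimorphisms cooperate: a block $0^{p}1^{p}$ is simultaneously an $E$-palindrome and compatible with palindromic reflection, so both an $R$-step and an $E$-step reproduce the same period. One then checks that only finitely many steps are spent before such a stable pattern is reached, the point being that once the block lengths are compatible with every antimorphism occurring infinitely often in $\Theta$, they can no longer change. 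Having reached eventual periodicity, pure periodicity follows because generalized pseudostandard words are uniformly recurrent and a recurrent eventually periodic word is purely periodic.

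For the converse I would argue by contraposition: if the rule is not eventually constant, then the S-rule and the D-rule each occur infinitely often, and I claim $\mathbf u$ is aperiodic. The idea is to follow the sequence of maximal block lengths of $\mathbf u$: an S-step merely continues or reflects the current block pattern, whereas a D-step forces a new block boundary whose length is pinned by the current closure, so infinitely many interleaved D-steps keep the block-length sequence from ever becoming periodic. Since the run-length sequence of a purely periodic word is itself periodic, $\mathbf u$ cannot be purely periodic, and being uniformly recurrent it is aperiodic. As a consistency check, for $\Theta=R^{\omega}$ this reduces exactly to the classical statement that an $R$-standard word is aperiodic (Sturmian) precisely when its directive sequence $\Delta$ is not eventually constant, which is the special case $a=c$, $\vartheta=R$ of the condition.

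The main obstacle in both directions is the same: controlling $(w_n\delta_{n+1})^{\vartheta_{n+1}}$ for an \emph{arbitrary} $\Theta$, including non-eventually-periodic ones. In the forward direction the subtle point is that finite prefixes can look irregular for a long time, so one must prove that the block pattern genuinely stabilizes rather than merely appears to; in the converse one must show that infinitely many rule-switches truly destroy periodicity and cannot conspire into a period. Both reduce to a finite case analysis of how the closure acts according to the local pattern $(\vartheta_n,\vartheta_{n+1},\delta_n,\delta_{n+1})$, and it is precisely to keep this case analysis finite and exhaustive that the normalization of Theorem~\ref{thm_norm} is needed.
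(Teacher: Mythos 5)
The paper itself offers no proof of Theorem~\ref{VZPSperiodic}: it is quoted as a result of Balkov\'a and Florian~\cite{BaFl}, so there is no internal argument to compare yours against. Judged on its own terms, your opening reduction is correct and genuinely useful: since every $w_n$ is a $\vartheta_n$-palindrome beginning with $c=\delta_1$, its last letter is $\vartheta_n(c)$, and the four choices of $(a,\vartheta)$ do collapse the stated biconditional into ``eventually always append the last letter'' versus ``eventually always append its complement.'' Your consistency check against the $R$-standard case is also right. But both implications are then only sketched, and the parts you defer are exactly the parts that constitute the theorem. In the forward direction, the ``invariance lemma'' (a long enough prefix of $z^{\omega}$ with $z$ a conjugate of $0^{p}1^{q}$ stays a prefix of $z^{\omega}$ after one closure step) and, more seriously, the claim that the block pattern stabilizes after finitely many steps for an \emph{arbitrary} $\Theta$ are asserted, not proved; ``once the block lengths are compatible with every antimorphism occurring infinitely often, they can no longer change'' is the whole difficulty, since $\Theta$ need not be eventually periodic and the two antimorphisms force different symmetries on the candidate period. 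In the converse direction, ``infinitely many interleaved D-steps keep the block-length sequence from ever becoming periodic'' is not an argument: you would need to exhibit, for each rule switch, a concrete obstruction (e.g.\ two occurrences of a factor with incompatible return words, or a growth/length estimate contradicting a period), and nothing in the proposal pins this down. As written, both directions reduce to ``a finite case analysis'' that is never carried out.

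There is also a quieter gap at the very first step. The theorem is stated for an arbitrary directive bi-sequence, while you immediately pass to the normalized one. Normalization preserves the generated word, but it modifies $(\Delta,\Theta)$ -- possibly in infinitely many positions, since the rewriting rule $(ab\bar b,\vartheta\overline{\vartheta}\overline{\vartheta})\rightarrow(ab\bar b b,\vartheta\overline{\vartheta}\vartheta\overline{\vartheta})$ inserts letters -- so you must additionally prove that the condition of the theorem holds for $(\Delta,\Theta)$ if and only if it holds for $(\widetilde{\Delta},\widetilde{\Theta})$. Without that equivalence, a proof for normalized sequences does not yield the theorem as stated. None of these gaps looks fatal to the overall strategy, but each one is a substantive lemma that the proposal treats as bookkeeping.
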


\section{Fixed points of morphisms among generalized pseudostandard words}
The aim of this section is to introduce a new class of aperiodic binary generalized pseudostandard words being fixed points of morphisms.
The only known aperiodic binary examples are so far:
	\begin{enumerate}
\item The Thue--Morse word which is the fixed point of the morphism $0 \mapsto 01, 1 \mapsto 10$ (or $0 \mapsto 0110, 1 \mapsto 1001$) and also the pseudostandard word with the directive bi-sequence $(\Delta, \Theta) = (01^\omega , R(ER)^\omega)$, as described in \cite{LuDeLu}.
\item $R$-standard words were studied in~\cite{DrJuPi}. An $R$-standard word $\mathbf u(\Delta, R^{\omega})$ is periodic if and only if the sequence $\Delta$ is eventually constant, i.e., $\Delta=va^{\omega}$, where $v \in \{0,1\}^*$ and $a \in \{0,1\}$. An aperiodic $R$-standard (standard Sturmian) word $\mathbf u(\Delta, R^{\omega})$ is a~fixed point of a~morphism if and only if the sequence $\Delta$ is periodic.   		
\end{enumerate}
	
\subsection{A new class of fixed points of morphisms among aperiodic binary generalized pseudostandard words}
	
	We will now study morphisms $\varphi_{k}$, for $k \in \mathbb{N}$, of the following form:
	\begin{equation}
	\label{eq:phik}
	\varphi_{k} :
	\begin{array}{l}
	0 \mapsto 0(110)^k,   \\
	1 \mapsto 1(001)^k.
	\end{array}
	\end{equation}
	Such a morphism $\varphi_k$ has evidently two fixed points, $\lim_{n \to \infty} \varphi_k^n(0)$ and $\lim_{n \to \infty} \varphi_k^n(1)$. We will prove that the first fixed point is a generalized pseudostandard word whose directive bi-sequence equals $(\Delta, \Theta)=(01^{\omega}, R(ER^{k})^{\omega})$ and the second fixed point has the directive bi-sequence $(\Delta, \Theta)=(10^{\omega}, R(ER^{k})^{\omega})$.
	
	First of all, given the fact that $\varphi_{k}(0)= \overline{\varphi_{k}(1)}$, the fixed point $\lim_{n \to \infty} \varphi_k^n(0)$ equals $\overline{\lim_{n \to \infty} \varphi_k^n(1)}$ and therefore, without loss of generality, we will study the fixed point $\mathbf{u}= \lim_{n \to \infty} \varphi_k^n(0)$ and then easily generalize the results for both cases.
	
	Let us consider the directive bi-sequence $(\Delta, \Theta)=(01^{\omega}, R(ER^{k})^{\omega})$. Note that this directive bi-sequence is normalized by Theorem~\ref{thm_norm}. Remark also that for every $k \in \mathbb N$ the corresponding word $\mathbf{u}=\mathbf{u}(\Delta, \Theta)$ is aperiodic by Theorem~\ref{VZPSperiodic}. We will show in two steps a relation between the prefixes $w_k$ obtained by the generalized pseudopalindromic closure and the morphism $\varphi_{k}$.
	
	Firstly, we will show a recursive relation concerning the prefixes $w_k$ of the word $\mathbf{u}$.  Let $K$ denote the length of the shortest period of $\Theta$, i.e., $K=|RER^{k-1}|= k+1$. We will show that the construction of these prefixes can be expressed depending on $K$.
	
	\begin{lemma}
		\label{Lwl}
		Let $(\Delta, \Theta) = (01^{\omega}, R(ER^{k})^{\omega})$ be the directive bi-sequence of the word $\mathbf{u}$. Let $w_i$, $i \in \mathbb{N}$, be its prefixes obtained by the generalized pseudopalindromic closure and $K= k+1$ the length of the shortest period of $\Theta$. Then for all $l \in {\mathbb N}_0$ the following holds:
		\begin{align*}
			w_{l\cdot K +2} & = w_{l\cdot K +1} \ E(w_{l\cdot K + 1}) = w_{lK+1}\overline{w_{lK+1}} & (E\text{-palindrome})\\
			w_{l\cdot K +3} & = w_{l\cdot K +2} \ R(w_{l\cdot K + 2}) = w_{lK+2}\overline{w_{lK+2}} & (R\text{-palindrome})\\
			w_{l\cdot K +4} & = w_{l\cdot K +3} \ w^{-1}_{l\cdot K +1} \ w_{l\cdot K+3} & (R\text{-palindrome})\\
			w_{l\cdot K +5} & = w_{l\cdot K +4} \ w^{-1}_{l\cdot K +3} \ w_{l\cdot K+4} & (R\text{-palindrome})\\
			\vdots			& \hspace{2.5em} \vdots&	\vdots \\
			w_{l\cdot K + K} & =  w_{l\cdot K + (K-1)} \ w^{-1}_{l\cdot K +(K-2)} \ w_{l\cdot K + (K-1)} & (R\text{-palindrome})\\
			w_{(l+1)\cdot K +1} & = w_{(l+1)\cdot K} \ w^{-1}_{l\cdot K + (K-1)} \ w_{(l+1)\cdot K} & (R\text{-palindrome})\\
		\end{align*}
	\end{lemma}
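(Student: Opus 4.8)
The plan is to prove all the displayed identities simultaneously by induction on $l \in \mathbb N_0$, reading the block of equalities as a single inductive step that manufactures $w_{(l+1)K+1}$ out of $w_{lK+1}$. Throughout the induction I would carry three invariants: $w_{lK+1}$ is an $R$-palindrome beginning (hence ending) with $0$; each $w_i$ with $i \equiv 2 \pmod{K}$ is an $E$-palindrome while every other $w_i$ is an $R$-palindrome; and, since the directive bi-sequence is normalized by Theorem~\ref{thm_norm}, these words already exhaust the $E$-palindromic, respectively $R$-palindromic, prefixes of $\mathbf u$. For each of the $K+1$ steps I would then verify the claimed value of $w_{n+1} = (w_n\delta_{n+1})^{\vartheta_{n+1}}$ by checking the three defining properties of the $\vartheta_{n+1}$-palindromic closure: the proposed word is a $\vartheta_{n+1}$-palindrome, it has $w_n\delta_{n+1}$ as a prefix, and it is the shortest such word. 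The genuine engine behind the overlap lines is the principle \emph{remove the longest proper $R$-palindromic prefix of $w_n$}; the index labels printed in the statement are its specialization to $K \geq 4$, and the degenerate cases $k \in \{1,2\}$ (that is $K \in \{2,3\}$, where the first and last overlap lines would collide) I would simply check by hand.

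The key organizing observation is that, because $\delta_n = 1$ for all $n \geq 2$, the behaviour at step $n+1$ is dictated solely by whether the closure type changes. It changes exactly at the two leading steps of the block: the $E$-closure at index $lK+2$ applied to the $R$-palindrome $w_{lK+1}$, and the $R$-closure at index $lK+3$ applied to the $E$-palindrome $w_{lK+2}$. In both of these the closure is the full doubling $w_n\overline{w_n}$, and the palindrome and prefix properties are immediate: from $R(w_{lK+1}) = w_{lK+1}$ one gets $E(w_{lK+1}) = \overline{w_{lK+1}}$, from $E(w_{lK+2}) = w_{lK+2}$ one gets $R(w_{lK+2}) = \overline{w_{lK+2}}$, and a one-line antimorphism computation then shows $w_n\overline{w_n}$ is indeed a $\vartheta_{n+1}$-palindrome; moreover $\overline{w_n}$ begins with $\overline{0} = 1 = \delta_{n+1}$, so $w_n\delta_{n+1}$ is a prefix.

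At every remaining step the type does not change: these are the $R$-closures at indices $lK+4, \dots, (l+1)K+1$ applied to the $R$-palindromes $w_{lK+3}, \dots, w_{(l+1)K}$, and the closure is the overlap $w_n\,p^{-1}w_n$, where $p$ is the longest proper $R$-palindromic prefix of $w_n$. By the normalization invariant this $p$ is simply the previous $R$-palindromic prefix: for $n = lK+3$ this is $p = w_{lK+1}$, because the intermediate prefix $w_{lK+2}$ is only an $E$-palindrome and is skipped, while for $n \geq lK+4$ it is $p = w_{n-1}$; this reproduces precisely the index pattern in the statement. That $w_n\,p^{-1}w_n$ is an $R$-palindrome with $w_n\delta_{n+1}$ as a prefix again follows from the boundary-letter invariants, the new letter at position $|w_n|+1$ being the first letter of $\overline{w_{lK+1}}$, namely $1 = \delta_{n+1}$.

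The real work, and the step I expect to be the main obstacle, is minimality: showing in each case that no shorter $\vartheta_{n+1}$-palindrome has $w_n\delta_{n+1}$ as a prefix, equivalently that the longest $\vartheta_{n+1}$-palindromic suffix of $w_n\delta_{n+1}$ has exactly the length forced by the claimed formula — length $2$ for the two doubling steps and length $|p|$ for the overlap steps. For the doubling steps I would rule out any $E$- (respectively $R$-) palindromic suffix longer than $2$ by reflecting such a suffix to a palindromic prefix of $\delta_{n+1}w_n$, using that $w_n$ is already a palindrome of the opposite type, and then invoking the completeness of the palindromic-prefix list from Theorem~\ref{thm_norm} to see that no prefix of the required type and length is available. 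For the overlap steps the same reflection argument, together with the inductive list of $R$-palindromic prefixes, pins $p$ down exactly and, importantly, also rules out a spurious doubling at the opening step $(l+1)K+1$ of the next period (there $w_{(l+1)K}$ is an $R$-palindrome, so the type is preserved and an overlap, not a doubling, results). Finally I would seed the induction with the base case $l=0$, where $w_1 = 0$ has length $1$ and the two-letter boundary invariants degenerate, by computing $w_2 = 01$, $w_3 = 0110$ and the subsequent overlaps directly and matching them against the formulas.
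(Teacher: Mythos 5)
Your proposal is correct and follows essentially the same route as the paper: both arguments reduce each step to identifying the longest $\vartheta_{n+1}$-palindromic suffix of $w_n\delta_{n+1}$, reflect palindromic suffixes to palindromic prefixes, and use normalization together with $\Delta=01^{\omega}$ (every palindromic prefix is followed by $1$) to pin that suffix down as $01$, $11$, or $1w_j1$ respectively. The only cosmetic difference is that you package the case analysis as an explicit induction on $l$ with stated invariants, whereas the paper verifies the cases directly.
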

	\begin{remark}
		The reason why we start from $l\cdot K +2$ is that $w_1$ is the only prefix for which the lemma does not hold. Naturally, if $K = k+1 =$ 2, 3 or 4, only the first 2, 3 or 4 lines hold and then another cycle is started with $l = l+1$.
	\end{remark}
	\begin{proof}
		Let $\Delta = \delta_1 \delta_2 \ldots$ and $\Theta = \vartheta_1 \vartheta_2 \ldots $ We will prove the lemma directly using the construction of pseudostandard words. In each case, we will construct $w_{i+1} = (w_i\delta_{i+1})^{\vartheta_{i+1}}$ after finding the longest $\vartheta_{i+1}$-palindromic suffix of $w_i\delta_{i+1}$. We will split the proof into several cases:
		\begin{enumerate}
			\item Case of $w_{l\cdot K +2}$
			
			If we show that the longest $E$-palindromic suffix of $w_{l\cdot K +1}1$ is $01$, then it is clear that $w_{l\cdot K +2} = w_{l\cdot K +1} \ E(w_{l\cdot K + 1}) = w_{lK+1}\overline{w_{lK+1}}$. The last equality holds because using the form of $\Theta=R(ER^k)^{\omega}$ one can see that $w_{l \cdot K+1}$ is an $R$-palindrome, and applying $E$ on an $R$-palindrome is equivalent to the exchange of zeros and ones.
			
			The longest $E$-palindromic suffix is at least $01$ because $w_{l\cdot K +1}$ is an $R$-palindrome and thus ends with a 0. It cannot be longer than $01$ because that would mean there exists an $E$-palindromic prefix of  $w_{l\cdot K +1}$ followed by a 0 (the prefix $w_{l\cdot K+1}$ is an $R$-palindrome and the reverse image of its $E$-palindromic suffix preceded by a 0 is an $E$-palindromic prefix followed by a 0) and this is not possible since every $E$-palindromic prefix is followed by 1 due to the fact that $\Delta$ is normalized and $\Delta = 01^{\omega}$.
			
			\item Case of $w_{l\cdot K +3}$
			
			Similarly to the case above, if we show that the longest $R$-palindromic suffix of $w_{l\cdot K +2}1$ is $11$, it directly follows that $w_{l\cdot K +3} = w_{l\cdot K +2} \ R(w_{l\cdot K + 2})  = w_{lK+2}\overline{w_{lK+2}}$. The last equality holds because again, applying the antimorphism $R$ on an $E$-palindrome is equivalent to the exchange of zeros and ones.
			
			The factor 11 is an $R$-palindromic suffix of $w_{l\cdot K +2}1$ because $w_{l\cdot K +2}$ ends with a $1$ since it is an $E$-palindrome that begins with $0$. Moreover, every $R$-palindromic suffix of $w_{l\cdot K +2}$ is preceded by a 0 because they are $E$-images of $R$-palindromic prefixes of $w_{l\cdot K+2}$ and all non-empty $R$-palindromic prefixes are followed by 1 because of the form of the directive bi-sequence. Hence, we cannot find a longer $R$-palindromic suffix of $w_{l\cdot K +2}$ preceded by 1 and therefore 11 is the longest palindromic suffix of $w_{l\cdot K +2}1$.
			
			\item Case of $w_{l\cdot K +4}$
			
			In this case, we want to prove that $1w_{l\cdot K +1}1$ is the longest $R$-palindromic suffix of $w_{l\cdot K +3}1$. Then, we have $w_{l\cdot K +4} = w_{l\cdot K +3} \ w^{-1}_{l\cdot K +1} \ w_{l\cdot K+3}$.
			
			Now, let us find the longest $R$-palindromic suffix of $w_{l\cdot K +3}$ preceded by a 1. Because of the normalization of $\Delta$, we know the only pseudopalindromic suffixes are images of the prefixes $w_i$.  It cannot be $w_{l\cdot K +3}$ since it is too long. It cannot be also $R(w_{l\cdot K +2})$ since it is an $E$-palindrome. The longest possibility is now $R(w_{l\cdot K +1})= w_{l\cdot K +1}$ and it is the correct one because it is preceded by $1$ since $w_{l\cdot K +3}$ is an $R$-palindrome and the prefix $w_{l\cdot K+1}$ is followed by 1 according to the form of the directive bi-sequence.
			
			\item Cases of $w_{l\cdot K +5}$ to $w_{(l+1)\cdot K +1}$
			
			In these last cases, we will proceed analogously to the previous case. We want to find the longest $R$-palindromic suffix of $w_{l\cdot K +(i-1)}1$, $i \in \{5,\ldots, K+1\}$. It can be easily seen that it is $1w_{l\cdot K +(i-2)}1$ because of the normalization of $\Delta$. It cannot be $1w_{l\cdot K +(i-1)}1$ (it is too long). The next longest $R$-palindromic suffix of $w_{l\cdot K +(i-1)}$ preceded by a 1 is $w_{l\cdot K +(i-2)}$ and therefore $1w_{l\cdot K +(i-2)}1$ is the longest $R$-palindromic suffix of $w_{l\cdot K +(i-1)}1$. As in the previous case, it follows that $w_{l\cdot K +i} = w_{l\cdot K + (i-1)} \ w^{-1}_{l\cdot K +(i-2)} \ w_{l\cdot K+(i-1)}$.
			
		\end{enumerate}
	\end{proof}
	
	\begin{table}[h!]
		\centering
		\begin{tabular}{l|l|l|l|}
			\cline{2-4}
			& \begin{tabular}[c]{@{}l@{}}$k = 1$: \rule{0pt}{2.6ex}\\ $\Theta = R(ER^1)^{\omega}$\end{tabular}                                    & \begin{tabular}[c]{@{}l@{}}$k = 2$: \rule{0pt}{2.6ex}\\ $\Theta = R(ER^2)^{\omega}$\end{tabular}                  & \begin{tabular}[c]{@{}l@{}}$k = 3$:\rule{0pt}{2.6ex}\\ $\Theta = R(ER^3)^{\omega}$\end{tabular}       \\ \hline
			\multicolumn{1}{|l|}{$w_1$} & 0\rule{0pt}{2.6ex}                                                                                                                  & 0\rule{0pt}{2.6ex}                                                                                                & 0\rule{0pt}{2.6ex}                                                                                    \\ \hline
			\multicolumn{1}{|l|}{$w_2$} & 01\rule{0pt}{2.6ex}                                                                                                                 & 01\rule{0pt}{2.6ex}                                                                                               & 01\rule{0pt}{2.6ex}                                                                                   \\ \hline
			\multicolumn{1}{|l|}{$w_3$} & 0110\rule{0pt}{2.6ex}                                                                                                               & 0110\rule{0pt}{2.6ex}                                                                                             & 0110\rule{0pt}{2.6ex}                                                                                 \\ \hline
			\multicolumn{1}{|l|}{$w_4$} & 01101001\rule{0pt}{2.6ex}                                                                                                           & 0110110\rule{0pt}{2.6ex}                                                                                          & 0110110\rule{0pt}{2.6ex}                                                                              \\ \hline
			\multicolumn{1}{|l|}{$w_5$} & 0110100110010110\rule{0pt}{2.6ex}                                                                                                   & 01101101001001\rule{0pt}{2.6ex}                                                                                   & 0110110110\rule{0pt}{2.6ex}                                                                           \\ \hline
			\multicolumn{1}{|l|}{$w_6$} & \begin{tabular}[c]{@{}l@{}}\rule{0pt}{2.6ex} 0110100110010110\\ 1001011001101001\end{tabular}                                       & \begin{tabular}[c]{@{}l@{}}\rule{0pt}{2.6ex} 01101101001001\\ 10010010110110\end{tabular}                         & 01101101101001001001\rule{0pt}{2.6ex}                                                                 \\ \hline
			\multicolumn{1}{|l|}{$w_7$} & \begin{tabular}[c]{@{}l@{}}\rule{0pt}{2.6ex} 0110100110010110\\ 1001011001101001\\ 1001011001101001\\ 0110100110010110\end{tabular} & \begin{tabular}[c]{@{}l@{}}\rule{0pt}{2.6ex} 01101101001001\\ 10010010110110\\ 100100110010010110110\end{tabular} & \begin{tabular}[c]{@{}l@{}}\rule{0pt}{2.6ex} 01101101101001001001\\ 10010010010110110110\end{tabular} \\ \hline
		\end{tabular}
		\caption{The first seven prefixes $w_i$ of the word $\mathbf u=\mathbf u(01^{\omega}, R(ER^{k})^{\omega})$ for $k=1$ (the Thue--Morse word), $2$ and $3$. }
		\label{Texk}
	\end{table}

	Secondly, we will prove a proposition showing a relation between the prefixes of $\mathbf{u} = \mathbf{u}(\Delta, \Theta)$ and the prefixes of the fixed point of $\varphi_{k}$. The fact that $\mathbf{u}$ is a fixed point of $\varphi_{k}$ follows then immediately from this proposition.
	\begin{proposition}\label{w_and_varphi}
		Let $(\Delta, \Theta) = (01^{\omega}, R(ER^{k})^{\omega})$ be the directive bi-sequence of the word $\mathbf{u}$. Let $w_i$, $i \in \mathbb{N}$, be its prefixes obtained by the generalized pseudopalindromic closure and $K= k+1$ the length of the shortest period of $\Theta$. Let $\varphi_k$ be the morphism defined in \eqref{eq:phik}. Then for every $l \in {\mathbb N}$:
		\begin{equation}
		\label{eq:wkphi}
			w_{l\cdot K + r} = \varphi_{k} (w_{(l-1)\cdot K + r})
		\end{equation}
		for all $r \in \{1,2,\ldots, K \}$.
	\end{proposition}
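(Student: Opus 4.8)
The plan is to prove the equivalent statement that $w_{m+K} = \varphi_k(w_m)$ holds for every $m \in \mathbb N$. Writing $m = (l-1)K + r$ with $l \in \mathbb N$ and $r \in \{1, \ldots, K\}$ sets up a bijection between $m \in \mathbb N$ and the pairs $(l,r)$ appearing in \eqref{eq:wkphi}, and under it $lK + r = m + K$; so the two formulations coincide. I would prove $w_{m+K} = \varphi_k(w_m)$ by strong induction on $m$, feeding the recurrences of Lemma~\ref{Lwl} into the morphism $\varphi_k$.

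First I would record two elementary properties of $\varphi_k$ that make the induction run. Since $\varphi_k(0) = \overline{\varphi_k(1)}$, the morphism commutes letter by letter with the bar map, that is, $\varphi_k(\overline{v}) = \overline{\varphi_k(v)}$ for every word $v$. Moreover, since every $w_i$ is a prefix of every longer $w_j$, whenever $v$ is a prefix of $w$ the image $\varphi_k(v)$ is a prefix of $\varphi_k(w)$ and $\varphi_k(v^{-1}w) = \varphi_k(v)^{-1}\varphi_k(w)$; hence $\varphi_k$ also commutes with the overlap operation $w_{j-1}w_{j-2}^{-1}w_{j-1}$ occurring in Lemma~\ref{Lwl}. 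The decisive structural point is that the list of recurrences in Lemma~\ref{Lwl} is periodic in the index with period $K$: the indices $m$ and $m+K$ sit at the same position inside their respective period blocks and therefore obey recurrences of exactly the same shape, with the same offsets of the subtracted prefixes, including the exceptional relation $w_{lK+4} = w_{lK+3}\,w_{lK+1}^{-1}\,w_{lK+3}$.

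For the base case $m = 1$ I would compute the prefixes explicitly up to $w_{K+1}$. Using Lemma~\ref{Lwl} at level $l = 0$ one checks $w_2 = 01$ and then, by a short induction along the block, that $w_j = 0(110)^{j-2}$ for $3 \le j \le K+1$: each step simply appends $w_{j-2}^{-1}w_{j-1} = 110$ (respectively $w_1^{-1}w_3 = 110$ for the exceptional index $j = 4$, and the palindromic step $w_3 = w_2\overline{w_2}$ at the start). In particular $w_{K+1} = 0(110)^{K-1} = 0(110)^k = \varphi_k(0) = \varphi_k(w_1)$, which is the base case. This is the one place where an explicit identification of a prefix is genuinely needed; the cases $m = 2,3$ then already follow from the induction step using only $\varphi_k(\overline{v}) = \overline{\varphi_k(v)}$.

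For the induction step, fix $m \ge 2$ and assume the claim for all smaller indices. I would write the recurrence satisfied by $w_m$ (Lemma~\ref{Lwl} at the appropriate level) and apply $\varphi_k$ to it. By the two commutation properties, $\varphi_k$ transforms this into precisely the recurrence satisfied by $w_{m+K}$, each factor $\varphi_k(w_{m-i})$ being replaced by $w_{m+K-i}$ via the induction hypotheses for $m-1$ and, depending on the position of $m$ in its period block, for $m-2$ or $m-3$ (all subtracted indices are at least $1$ because $m \ge 2$). Concretely, in the palindromic cases $w_m = w_{m-1}\overline{w_{m-1}}$ one obtains $\varphi_k(w_m) = w_{m+K-1}\overline{w_{m+K-1}} = w_{m+K}$, while in the overlap cases one obtains $\varphi_k(w_m) = w_{m+K-1}\,w_{m+K-j}^{-1}\,w_{m+K-1} = w_{m+K}$ with $j \in \{2,3\}$, which closes the induction. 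The main obstacle I anticipate is not a single deep step but the careful bookkeeping needed to confirm that the recurrence shapes really match across one period, in particular handling the exceptional fourth line of Lemma~\ref{Lwl} and the short-period cases $K = 2$ and $K = 3$, where several of the listed lines collapse onto the block boundary, together with a clean verification that $\varphi_k$ commutes with both the bar map and the $w^{-1}$-overlap, so that the image of a generalized pseudopalindromic closure recurrence is again such a recurrence.
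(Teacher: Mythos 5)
Your proposal is correct and takes essentially the same approach as the paper: an induction that pushes $\varphi_k$ through the recurrences of Lemma~\ref{Lwl} using $\varphi_k(\overline{v})=\overline{\varphi_k(v)}$ and compatibility with the overlap operation, anchored by the same explicit base computation $w_{K+1}=0(110)^{K-1}=\varphi_k(w_1)$. The only difference is cosmetic -- your strong induction on the single index $m$ is the paper's nested induction on $l$ with an inner pass over $r\in\{1,\dots,K\}$ repackaged.
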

	\begin{proof}
		We proceed by induction on $l$.
		
		First, we will prove that if the equality \eqref{eq:wkphi} holds for some $l-1$, it also holds for $l$. The proof for $l=1$ will be then easier because during the proof of the induction step, we will show that for a fixed $l$, if the equality  holds for $r=1$, then it easily follows that \eqref{eq:wkphi} holds for all $r \in \{2,\ldots,K\}$.
		
We rewrite the prefixes $w_i$ using Lemma \ref{Lwl} and we apply the fact that $\varphi_k(\overline{w})=\overline{\varphi_k(w)}$.

$\begin{array}{rcl}w_{l\cdot K + 1}&=& w_{l\cdot K } \ w^{-1}_{l\cdot K - 1} \ w_{l\cdot K} \\
&=&  w_{(l-1)\cdot K + K } \ w^{-1}_{(l-1)\cdot K + K - 1} \ w_{(l-1)\cdot K + K} \\
&=&\varphi_k(w_{(l-2)\cdot K + K }) \left(\varphi_k( w_{(l-2)\cdot K + K - 1})\right)^{-1} \varphi_k(w_{(l-2)\cdot K + K}) \\
&=& \varphi_k(w_{(l-2)\cdot K + K } \ w^{-1}_{(l-2)\cdot K + K - 1} \  w_{(l-2)\cdot K + K}) \\
&=& \varphi_k (w_{(l-1)\cdot K +1})\end{array}$
			
$\begin{array}{rcl}w_{l\cdot K + 2}&=& w_{l\cdot K +1} \ \overline{w_{l\cdot K + 1}}\\
& =& \varphi_k (w_{(l-1)\cdot K +1}) \ \overline{\varphi_k (w_{(l-1)\cdot K + 1})} \\
&=&  \varphi_k (w_{(l-1)\cdot K +1} \ \overline{w_{(l-1)\cdot K + 1}}) \\
& =& \varphi_k (w_{(l-1)\cdot K +2})\end{array}$					

$\begin{array}{rcl}w_{l\cdot K + 3} &=& w_{l\cdot K +2} \ \overline{w_{l\cdot K + 2}}\\
 &=& \varphi_k (w_{(l-1)\cdot K +2}) \ \overline{\varphi_k (w_{(l-1)\cdot K +2})}\\
  &=& \varphi_k (w_{(l-1)\cdot K +2} \ \overline{w_{(l-1)\cdot K +2}})\\
   &=& \varphi_k (w_{(l-1)\cdot K + 3})
   \end{array}$
			
$\begin{array}{rcl} w_{l\cdot K + 4}& =& w_{l\cdot K +3} \ w^{-1}_{l\cdot K +1} \ w_{l\cdot K+3} \\
&=& \varphi_k (w_{(l-1)\cdot K + 3}) \ \left(\varphi_k (w_{(l-1)\cdot K +1})\right)^{-1} \ \varphi_k (w_{(l-1)\cdot K + 3}) \\
&=& \varphi_k (w_{(l-1)\cdot K + 3} \ w_{(l-1)\cdot K +1}^{-1} \ w_{(l-1)\cdot K + 3}) \\
&=& \varphi_k (w_{(l-1)\cdot K + 4}) \end{array}$
			
\noindent Consider $w_{l\cdot K + i}$, $i = \{5, \ldots, K\}$: We suppose that we first prove the proposition for $i = 5$, then $6$ etc. We proceed exactly in the same way as in the cases above.
			
$\begin{array}{rcl}w_{l\cdot K +i} &=& w_{l\cdot K + (i-1)} \ w^{-1}_{l\cdot K +(i-2)} \ w_{l\cdot K+(i-1)} \\
&=& \varphi_k (w_{(l-1)\cdot K + (i-1)} \ w^{-1}_{(l-1)\cdot K +(i-2)} \ w_{(l-1)\cdot K+(i-1)})\\
&=& \varphi_k (w_{(l-1)\cdot K + (i-1)})\left(\varphi_k(w_{(l-1)\cdot K +(i-2)})\right)^{-1} \varphi_k( w_{(l-1)\cdot K+(i-1)})\\
&=& \varphi_k (w_{(l-1)\cdot K +i})\end{array}$
			
\noindent Now, let us focus on the case $l=1$. If we prove the proposition for $r=1$, we will have that $w_{K+1} = \varphi_k (w_1)$. Moreover, Lemma $\ref{Lwl}$ holds for all $w_i$, $i \geq 2$, and therefore, all above equalities are also satisfied if we set $l= 1$, so the proposition holds for $r = \{2, \ldots, K\}$.
		
		Thus, the last case we need to prove is the statement for $l=1$ and $r=1$, i.e., $\varphi_k(w_1)=w_{K+1}$. By the definition of $\varphi_k$, we have $\varphi_k (w_1) = 0(110)^k = 0(110)^{K-1}$. We have to show that $0(110)^{K-1}$ is equal to $w_{K+1}$. We have $w_1 = 0$, $w_2 = 01$, $w_3 = (w_21)^R = 0110 = 0(110)^1$, $w_4 = (w_31)^R = 0110110 = 0(110)^2$ and proceeding in the same way, i.e., adding 1 to the end and making the $R$-palindromic closure, we obtain $ w_{K+1}=0(110)^{K-1}$.
		
	\end{proof}
By Proposition~\ref{w_and_varphi}, we get the final corollary providing a new class of binary generalized pseudostandard words being fixed points of morphisms.
	\begin{corollary}
		Denote $\mathbf{u}=lim_{n \to \infty} \varphi_k^n(0)$ and $\mathbf{v}=lim_{n \to \infty} \varphi_k^n(1)$, where $\varphi_k $ is defined in \eqref{eq:phik}. Then $\mathbf{u}=\mathbf{u}(01^\omega, R(ER^k)^\omega)$ and $\mathbf{v}=\mathbf{v}(10^\omega, R(ER^k)^\omega)$.
	\end{corollary}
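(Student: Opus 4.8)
The plan is to read the statement off Proposition~\ref{w_and_varphi}, adding only a short limit argument and a complementation argument. First I would reindex the proposition: every integer $j\ge 1$ is written uniquely as $j=(l-1)K+r$ with $l\in\mathbb N$ and $r\in\{1,\dots,K\}$, and for such $j$ equation \eqref{eq:wkphi} reads $w_{j+K}=\varphi_k(w_j)$. Hence $\varphi_k(w_j)=w_{j+K}$ holds for \emph{all} $j\ge 1$.

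Write $\mathbf t=\mathbf u(01^{\omega},R(ER^k)^{\omega})$. The prefixes $(w_j)_{j\ge 1}$ form a nested sequence of prefixes of $\mathbf t$ with $|w_j|\to\infty$, so $\mathbf t=\lim_{j\to\infty}w_j$. Because $w_j$ is a prefix of $\mathbf t$, its image $\varphi_k(w_j)$ is a prefix of $\varphi_k(\mathbf t)$; but by the previous step $\varphi_k(w_j)=w_{j+K}$ is also a prefix of $\mathbf t$, of length tending to infinity. Thus $\mathbf t$ and $\varphi_k(\mathbf t)$ agree on arbitrarily long prefixes, whence $\varphi_k(\mathbf t)=\mathbf t$: the word $\mathbf t$ is a fixed point of $\varphi_k$.

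It remains to identify which fixed point. Since $w_1=0$, the word $\mathbf t$ starts with $0$, and $\varphi_k$ is prolongable on $0$ because $\varphi_k(0)=0(110)^k$ begins with $0$ and has length at least $2$. As recalled in the Preliminaries, a morphism prolongable on $0$ has a unique fixed point beginning with $0$, namely $\lim_{n\to\infty}\varphi_k^n(0)$; any such fixed point has $\varphi_k^n(0)$ as a prefix for all $n$. Therefore $\mathbf t=\lim_{n\to\infty}\varphi_k^n(0)=\mathbf u$, which is the first asserted identity.

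For $\mathbf v$ I would use the complementation symmetry. Recall from the remark preceding Proposition~\ref{w_and_varphi} that $\varphi_k(0)=\overline{\varphi_k(1)}$, so $\mathbf v=\overline{\mathbf u}$. On the other hand, complementing every letter of $\Delta$ while keeping $\Theta$ fixed complements the generated word, that is, $\mathbf u(\overline{\Delta},\Theta)=\overline{\mathbf u(\Delta,\Theta)}$; this follows by induction on the prefixes once one checks that the $\vartheta$-palindromic closure commutes with the bar map, i.e.\ $\overline{x^{\vartheta}}=(\overline x)^{\vartheta}$ (complementation preserves both $R$- and $E$-palindromicity and leaves lengths unchanged, so it respects shortest-closure minimality). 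Applying this with $\Delta=01^{\omega}$ gives $\mathbf v=\overline{\mathbf u}=\overline{\mathbf u(01^{\omega},R(ER^k)^{\omega})}=\mathbf u(10^{\omega},R(ER^k)^{\omega})$, as claimed. I expect the only step that is not pure bookkeeping to be this auxiliary commutation of closure with complementation; everything else is the limit passage and the uniqueness of the fixed point of a prolongable morphism.
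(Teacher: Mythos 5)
Your proposal is correct and follows essentially the same route as the paper: the paper derives the corollary directly from Proposition~\ref{w_and_varphi} (treating the passage to the limit as immediate) and handles $\mathbf{v}$ via the complementation symmetry $\varphi_k(0)=\overline{\varphi_k(1)}$ noted before the proposition. You merely write out the details the paper leaves implicit -- the reindexing, the limit argument, the uniqueness of the fixed point of a prolongable morphism, and the identity $\mathbf{u}(\overline{\Delta},\Theta)=\overline{\mathbf{u}(\Delta,\Theta)}$ -- all of which are sound.
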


\section{Open problems}
According to our computer experiments, it seems that the morphisms defined in~\eqref{eq:phik} are the only primitive morphisms whose fixed points are aperiodic binary generalized pseudostandard words that are not standard Sturmian words. Let us state thus the following conjecture.
\begin{conjecture}
Let $\mathbf u$ be an aperiodic binary generalized pseudostandard word, not standard Sturmian, being a~fixed point of a~primitive morphism. Then $\mathbf u=\varphi_k(\mathbf u)$ for some $k \in \mathbb N$, where $\varphi_k$ is the morphism defined in~\eqref{eq:phik}.
\end{conjecture}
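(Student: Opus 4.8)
The family $\{\varphi_k\}$ already provides one direction of the conjecture: by the corollary to Proposition~\ref{w_and_varphi}, each word $\mathbf u(a\overline a^\omega, R(ER^k)^\omega)$ is a fixed point of the primitive morphism $\varphi_k$ from \eqref{eq:phik}. The whole content of the conjecture is therefore the \emph{converse classification}: among aperiodic, non-Sturmian binary generalized pseudostandard words, only these bi-sequences can produce a fixed point of a primitive morphism. My plan is to take an arbitrary such word $\mathbf u=\psi(\mathbf u)$ with $\psi$ primitive, pass to its normalized directive bi-sequence $(\Delta,\Theta)$ via Theorem~\ref{thm_norm}, and then force $(\Delta,\Theta)$ down to this single family by combining substitution invariance with the structural theorems already available.

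First I would reduce to the generic shape of $\Theta$. If $\Theta$ contained only finitely many letters $E$, then from some point on the construction would be pure $R$-palindromic closure; a separate argument — which I expect to be routine, using the normalization rules of Theorem~\ref{thm_norm} together with the periodicity criterion of Theorem~\ref{VZPSperiodic} and the known classification of substitution-invariant $R$-standard words recalled above — should show that such a word is either periodic, standard Sturmian, or not a fixed point of a primitive morphism, and hence falls outside our hypotheses. So I may assume $\Theta$ has infinitely many $E$'s, in which case the $E$- and $R$-palindromic prefixes alternate along the normalized sequence $(w_i)$, and Theorem~\ref{VZPSperiodic} rephrases aperiodicity as the failure, on every tail, of the equivalence $\delta_{n+1}=a\Leftrightarrow\vartheta_n=\vartheta$.

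The heart of the argument is to turn substitution invariance into \emph{eventual periodicity} of $(\Delta,\Theta)$. The crucial leverage is that, after normalization, the prefixes $w_i$ are \emph{exactly} the $R$- and $E$-palindromic prefixes of $\mathbf u$; this is an intrinsic set attached to $\mathbf u$, so any morphism fixing $\mathbf u$ must carry it into itself. Imitating the mechanism by which $\varphi_k$ acts in Proposition~\ref{w_and_varphi} — where it sends $w_{(l-1)K+r}$ to $w_{lK+r}$, shifting the index by the period $K$ and preserving the palindrome type — I would show that a suitable power $\psi^p$ maps $w_i$ to $w_{i+s}$ for all large $i$, with a fixed shift $s\ge1$ compatible with the alternation of $R$- and $E$-palindrome types. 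Such a relation induces a shift-and-substitute self-map on the directive bi-sequence, and since $\psi$ is fixed and only finitely many local configurations occur, this forces $(\Delta,\Theta)$ to be eventually periodic, exactly paralleling the known fact that $\mathbf u(\Delta,R^\omega)$ is substitution invariant iff $\Delta$ is periodic. \textbf{I expect this to be the main obstacle.} Making ``$\psi$ permutes the palindromic prefixes'' rigorous needs a genuine $S$-adic representation of generalized pseudostandard words adapted to the two antimorphisms, together with a recognizability property, so that a Durand-type theorem equating substitution invariance with eventual periodicity of the directive data can be applied; constructing that $S$-adic system and proving its recognizability is the real technical core.

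Granting eventual periodicity of $(\Delta,\Theta)$, what remains is a finite classification of the admissible eventual periods. I would intersect three constraints: the forbidden patterns imposed by the normalization Theorem~\ref{thm_norm}; the requirement that the periodicity condition of Theorem~\ref{VZPSperiodic} \emph{fail}, so that $\mathbf u$ remains aperiodic; and the demand that $\mathbf u$ be a true fixed point, $\mathbf u=\psi(\mathbf u)$, rather than merely substitutive or a shifted image, which pins down the initial segment of $(\Delta,\Theta)$. Carrying out the resulting finite case analysis should leave precisely the bi-sequences with $\Delta$ eventually constant and $\Theta$ eventually $(ER^k)^\omega$; matching the forced initial data then yields $(\Delta,\Theta)=(a\overline a^\omega, R(ER^k)^\omega)$, and the letter-exchange symmetry $0\leftrightarrow1$ accounts for the two choices of $a$, i.e.\ for $\mathbf u$ and its complement. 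Together with the converse inclusion from the corollary to Proposition~\ref{w_and_varphi}, this would establish the conjecture.
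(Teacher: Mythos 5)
The statement you are addressing is stated in the paper only as a \emph{conjecture}, supported by computer experiments; the paper contains no proof of it, so there is no argument of the authors to compare yours against. Judged on its own terms, your proposal is a research plan rather than a proof, and you concede as much: the pivotal step --- upgrading ``$\mathbf u=\psi(\mathbf u)$ for a primitive $\psi$'' to ``some power $\psi^p$ maps $w_i$ to $w_{i+s}$ for all large $i$'' and hence to eventual periodicity of $(\Delta,\Theta)$ --- is exactly the part you defer to an unconstructed $S$-adic representation with a recognizability property and a Durand-type theorem. Nothing available in the paper (Theorems~\ref{thm_norm} and~\ref{VZPSperiodic}, Proposition~\ref{w_and_varphi}) supplies this, and even the entry point of that step is nontrivial: a morphism fixing $\mathbf u$ has no a priori reason to map $R$-palindromic (resp.\ $E$-palindromic) prefixes to pseudopalindromic prefixes at all; that requires $\psi$ to intertwine suitably with $R$ and $E$ (a ``class $P$''-type property), which must be derived from the hypotheses, not assumed.

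Two further steps are also only asserted. The reduction of the case ``finitely many $E$'s in $\Theta$'' to periodic or standard Sturmian words is called routine, but it is itself a classification statement about words whose directive bi-sequence has an $R^\omega$ tail yet a non-$R$ prefix, and it does not follow formally from the quoted result on $R$-standard words (which concerns $\Theta=R^\omega$ exactly). The concluding ``finite case analysis'' of admissible eventual periods of $(\Delta,\Theta)$ is not finite as stated: the period length is unbounded, so one needs a structural argument, not an enumeration, to exclude for instance periods of $\Theta$ containing two or more $E$'s, or $\Delta$ not eventually constant. (A minor error: when $\Theta$ has infinitely many $E$'s, the $E$- and $R$-palindromic prefixes need not alternate --- already for $\Theta=R(ER^k)^\omega$ there are runs of $k$ consecutive $R$'s.) In short, your proposal correctly identifies what would have to be proved and is a reasonable strategy, but it proves none of the three key steps; the statement remains a conjecture.
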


In this note, we focused only on binary words. Generalized pseudostandard words are defined over any alphabet $\{0,1, \ldots, m-1\}$ for $m>1, \ m \in \mathbb N$~\cite{LuDeLu}. To our knowledge, the only fact known about fixed points of primitive morphisms in a~multiliteral case is a~result from~\cite{JaPeSt}:
The generalized Thue--Morse words are defined for $m,b \in {\mathbb N}$, $m>1$, $b>1$ as follows:
		$$\mathbf{t}_{b,m} = \big{(} s_b(n) \bmod m\big)^{\infty}_{n=0}$$
		where $s_b(n)$ denotes the digit sum of the expansion of number $n$ in the base $b$. Such words are fixed points of morphisms. A~generalized Thue--Morse word ${\mathbf{t}}_{b,m}$ is a generalized pseudostandard word if and only if $b \leq m$ or $b-1 = 0 \pmod{m}$. Note that the Thue--Morse word is a special case of $\mathbf{t}_{b,m}$ for $b = m = 2$. For the form of morphisms whose fixed points are the generalized Thue--Morse words and for the form od their directive bi-sequence $(\Delta, \Theta)$ and other properties, see \cite{JaPeSt}. Hence, it is an open problem to detect fixed points of primitive morphisms over larger alphabets in the class of aperiodic generalized pseudostandard words.

\section*{Acknowledgements}
We acknowledge the financial support of Czech Science
Foundation GA\v{C}R 13-03538S.



\begin{thebibliography}{15}
\bibitem{BaFl} L\!'. Balkov\'a and J. Florian. On periodicity of generalized pseudostandard words, {\it Electron. J. Combin} {\bf 23(1)} (2016) \#P1.2.
\bibitem{MaPa} A. Blondin Mass\'e, G.~Paquin, H.~Tremblay, and L.~Vuillon. On generalized pseudostandard words over binary alphabet, {\it J. Integer Seq.} {\bf 16} (2013) Article 13.2.11.



\bibitem{LuDeLu} A.~de Luca and A.~De Luca. Pseudopalindromic closure operators in free monoids, {\it Theoret. Comput. Sci.} {\bf 362} (2006), 282-300.

\bibitem{DrJuPi} X.~Droubay, J.~Justin, and G.~Pirillo. Episturmian words and some constructions of de Luca and Rauzy, {\it Theoret. Comput. Sci.} {\bf 225} (2001), 539-553.

\bibitem{JaPeSt} T.~Jajcayov\'a, E.~Pelatov\'a, and \v S. Starosta. Palindromic closures using multiple antimorphisms, {\it Theoret. Comput. Sci.} {\bf 533} (2014), 37-45.



%

\end{thebibliography}
\end{document}